\title{Cholesky decompositions of integral operators and the Fredholm determinant}
\author{Niels Lundtorp Olsen}
\date{}
\newcommand{\vektor}[1]{\begin{pmatrix} #1 \end{pmatrix}}
\newcommand{\pil}{\rightarrow}
\newcommand{\M}[1]{\mathbb{#1}}
\newcommand{\R}{\mathbb{R}}
\newcommand{\brak}[1]{\langle #1 \rangle}
\newcommand{\trans}{^\top \!}
\newcommand{\I}{\mathbf{I}}
\newcommand{\de}{\: \mathrm{d}}
\newcommand{\inte}[4]{\int_{#1}^{#2} \! #3 \de {#4} }
\theoremstyle{definition}
\newtheorem{cond}{Condition}
\newtheorem{lemma}[cond]{Lemma}
\newtheorem{thm}{Theorem}
\theoremstyle{definition}
\begin{document}

\maketitle

This paper presents an operator version of the well-known and popular tool for calculating the determinant of matrix using its Cholesky decomposition:
$$\log\det M = 2\sum_{i=1}^{n} \log L_{ii}$$
where $LL\trans = M$ is the Cholesky decomposition of $M$. 

The equivalent result for operators in $L^2[0,1]$ is
$$\log \det (\I + M) := \log (\text{Fredholm determinant of }M) = \inte{0}{1}{T(t,t)}{t}.$$
where $M$ is an integral operator, and $(\I + T)(\I + T^*) = (\I + M)$ is the Cholesky decomposition of $M$. 

\subsection*{Background}

Consider the Hilbert space $\mathbb{H}$ of square-integrable functions on $[0,1]$, $\mathbb{H} = \mathcal{B}(L^2[0,1]; \R)$. 
Let $K$ be an integral operator in $\mathcal{B}(\M{H})$. % with kernel $K(x_p, x_q) $.
We shall use the same symbol for an integral operator and its kernel, ie.:
$$
K f(t) = \inte{0}{1}{K(t,s) f(s)}{s}, \quad f \in L^2[0,1]
$$
Furthermore, we shall denote the identify operator by  $\I$ and use $^\ast$ to denote the adjoint of an operator. 

We define the Fredholm determinant for an integral operator $K \in \mathcal{B}(L^2[0,1]; \R)$ by:
\begin{equation}
Fd(K) = \sum_{k=0}^\infty \frac{1}{n!} \int_0^1 \dots \int_0^1 \det [K(x_p, x_q) ]_{p,q = 1}^n \de x_1 \dots \de x_n,
\end{equation}
Here the function $z \mapsto Fd(zK)$ is an entire function on the complex plane \citep{fredholm1903}.
$Fd(K)$ is well-defined if $K$ is block-continuous, but \emph{not} for triangular operators (as the integral along the diagonal is ambiguous if $K$ is discontinuous across the diagonal).
We refer to the discussion in \cite{bornemann2010} regarding different definitions of the Fredholm determinant. \medskip

The Fredholm determinant is the equivalent of the "usual" determinant for operators on $\R^n$ (ie. matrices) and was introduced in \cite{fredholm1903} to solve integral equations on the form $\phi = f + zK\phi$ for unknown $\phi \in L^2[0,1]$.  

\subsection*{Cholesky decomposition for matrices}
One of the most appreciated computational tools for positive definite matrices is the \emph{Cholesky decomposition}: Let $M \in \R^{n\times n}$ be positive definite. Then there exists a unique decomposition $M = LL\trans$, where $L$ is a lower triangular matrix. There exist specialised algorithms for computing the Cholesky decomposition, something which many programming languages implements; the Wikipedia entry on the Cholesky decomposition \footnote{\url{https://en.wikipedia.org/wiki/Cholesky_decomposition}} has a good overview. 
%fast and numerically stable algorithms for determining this decomposition.

The Cholesky decomposition may aid in various computations involving $M$. Assume $M$ has the Cholesky decomposition $ M = LL\trans$ and assume $y \in \R^n$: %. We then have:
\begin{itemize}
	\item Inner product using $M$ or $M^{-1}$: $y\trans M y = ||L\trans y||^2$, $y\trans M^{-1} y = ||L^{-1} y||^2$.
	\item Inversion of $M$: $M^{-1} = L^{-1, \top}L^{-1}$.
	\item Log-determinant of $M$: $\log\det M = 2\sum_{i=1}^{n} \log L_{ii}$
\end{itemize}
where the triangular structure furthermore can be used for fast calculations of $L^{-1}$ and $L^{-1}y$. 

\subsection*{Cholesky decomposition for operators on $L^2[0,1]$}
We can also define Cholesky decompostions for integral operators on $\M{H}$, though some care needs to be taken as integral operators are not generally invertible. 

Let $A \in \mathcal{B}(\M{H})$ be a positive definite integral operator. We define the cholesky decomposition of $\I + A$ as $(\I + T) (\I + T^*)$, where $T$ is a triangular operator (ie. an integral operator where $T(x,y) = 0$ for $x > y$).  % TJEK FORTEGN PÅ ULIGHED

The Cholesky decomposition of $\I + A$ satisfy inversion and inner product properties similar to matrices:
$$(I+A)^{-1} = (\I + T^{-1,\ast})(\I + T^{-1}) $$
and
$$\brak{f,(I+A)^{-1}f} = ||(\I + T^{-1})f||^2 $$
We refer to \cite[chap. XXII]{gohberg1993} for a general discussion on LU decompositions of operators on $L^2[0,1]$.

\paragraph{The Cholesky decomposition and the determinant} %
The determinant is not well-defined for operators on $L^2[0,1]$, so it is not obvious if or how one could have a statement similar to $\log\det M = \sum_{i=1}^{n} \log L_{ii}$. % from above. 
However, knowing that the Fredholm determinant plays a role with many similarities to the ordinary determinant, we could hope for a statement along these lines.

This is indeed the case if we define $\det (\I + M) =  Fd(M)  $. The theorem is the following: 
%We let $\I$ refer to the identity operator and $^*$ to the adjoint. 
%
\begin{thm} \label{fdet-dekomp-thm}
Let $A$ be an integral operator on $[0,1]$ with continuous kernel. Suppose $\I+A$ decomposes $\I + A = (\I + T) (\I + T^*)$ where $T$ is a triangular operator. 

Assume that $T$ is continuous on the closed lower triangle $\{(x,y) | 0 \leq x \leq 1, 0 \leq y \leq x \}$ (in general, $T$ is not continuous as a function on $[0,1]^2$). We then have
$$\log \det (\I + A) = \inte{0}{1}{T(t,t)}{t}.$$

\end{thm}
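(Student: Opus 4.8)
The plan is to work not with the Fredholm determinant directly but with the regularised (Hilbert--Carleman) determinant $\det{}_2$, because the obvious route — multiplicativity, $\det(\I+A)=\det(\I+T)\,\det(\I+T^*)$ — is \emph{not} available. Indeed, $A$ has a continuous kernel and is positive, so by Mercer's theorem it is trace class with $\tr A=\inte{0}{1}{A(t,t)}{t}$; but the triangular factor $T$ is in general only Hilbert--Schmidt (for instance, for $Tf(x)=\int_0^x g(y)f(y)\,dy$ the singular values decay like $1/k$), so the Fredholm determinant fails to be multiplicative across the factorisation. Pinning down exactly \emph{how} multiplicativity fails is the heart of the matter, and it will turn out that the discrepancy is a single exponential factor $e^{-\tr(TT^*)}$.

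Concretely I would chain together four standard facts. (i) For trace class $A$, $\det(\I+A)=\det{}_2(\I+A)\,e^{\tr A}$. (ii) For Hilbert--Schmidt $B,C$ with $BC$ trace class (here $B=T$, $C=T^*$, $BC=TT^*$), the corrected multiplicativity law $\det{}_2\big((\I+B)(\I+C)\big)=\det{}_2(\I+B)\,\det{}_2(\I+C)\,e^{-\tr(BC)}$ holds. (iii) The key lemma: $\det{}_2(\I+T)=1$ for \emph{every} triangular Hilbert--Schmidt operator $T$. To see this, note that a triangular operator with bounded kernel is quasinilpotent (the Volterra bound $\|T^n\|\le C^n/n!$), so $\I+zT$ is invertible for all $z\in\C$, hence $z\mapsto\det{}_2(\I+zT)$ is an entire, zero-free function whose logarithm (normalised to vanish at $z=0$) equals $\sum_{k\ge2}\frac{(-1)^{k+1}}{k}z^k\tr(T^k)$ near the origin; each $\tr(T^k)$ with $k\ge2$ vanishes (by Lidskii, since $T^k$ is trace class with spectrum $\{0\}$ — equivalently, the kernel of $T^k$ is continuous and vanishes on the diagonal), so $\log\det{}_2(\I+zT)\equiv0$. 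The same applies to $T^*$. Combining (i)--(iii): $\det(\I+A)=e^{\tr A}e^{-\tr(TT^*)}$, i.e.
$$\log\det(\I+A)=\tr A-\tr(TT^*)=\tr A-\|T\|_{\mathrm{HS}}^2.$$

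It remains to recognise $\tr A-\|T\|_{\mathrm{HS}}^2$ as $\inte{0}{1}{T(t,t)}{t}$, and this is exactly where the continuity hypothesis on $T$ and the ``diagonal ambiguity'' flagged in the introduction enter. From $\I+A=\I+T+T^*+TT^*$ one reads off the kernels: $TT^*$ has continuous kernel with $(TT^*)(t,t)=\int_0^1|T(t,z)|^2\,dz$, and — crucially — $T+T^*$ has a \emph{continuous} kernel whose diagonal value is $T(t,t)$, not $2\,T(t,t)$, because the supports of $T$ and $T^*$ are the two closed half-squares and overlap only in the (null) diagonal, so near the diagonal one never adds two nonzero contributions. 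Hence $A(t,t)=T(t,t)+\int_0^1|T(t,z)|^2\,dz$ as continuous functions, and integrating in $t$ gives $\tr A=\inte{0}{1}{T(t,t)}{t}+\|T\|_{\mathrm{HS}}^2$, which is precisely the identity needed. Assembling the pieces yields $\log\det(\I+A)=\inte{0}{1}{T(t,t)}{t}$.

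The step I expect to need the most care is the interplay in (iii) between the two faces of ``triangular with kernel continuous up to the diagonal'': such a kernel must be treated as genuinely continuous when applying the diagonal-integral formula for the trace, yet its square (and $TT^*$, up to the clean term $\|T\|_{\mathrm{HS}}^2$) behaves like a diagonal-free Volterra kernel; making $\tr(T^k)=0$ for $k\ge2$ and $\tr(TT^*)=\|T\|_{\mathrm{HS}}^2$ fully rigorous is where the work is. A more computational alternative avoids $\det{}_2$ entirely: use that $\det(\I+A)=\lim_n\det\big(\I_n+\tfrac1n[A(t_i,t_j)]_{i,j\le n}\big)$ with $t_i=i/n$ for continuous kernels, observe that this determinant equals $\exp\big(2\sum_i\log(L_n)_{ii}\big)$ for its matrix Cholesky factor $L_n$, and show from the Cholesky recursion that $(L_n)_{ii}=1+\tfrac1{2n}T(t_i,t_i)+o(1/n)$ uniformly in $i$, so that $2\sum_i\log(L_n)_{ii}\to\inte{0}{1}{T(t,t)}{t}$; there the obstacle is the uniform error control in the recursion, which is why I would favour the operator-theoretic route above.
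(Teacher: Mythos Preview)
Your argument is correct and is a genuinely different proof from the paper's. The paper proceeds \emph{locally}: it restricts $A$ and $T$ to $[0,t]$, uses a Schur-complement block decomposition to write $\det(\I+A_{t+s})=\det(\I+A_t)\det(\I+X_s+X_s^*+X_sX_s^*)$, and then proves directly from the Fredholm series that the $s$-derivative of the second factor at $s=0$ equals $T(t,t)$; integrating the resulting ODE $\frac{d}{dt}\log\det(\I+A_t)=T(t,t)$ gives the theorem. Your route is \emph{global and algebraic}: you replace the unavailable multiplicativity of the Fredholm determinant by the corrected multiplicativity of the Hilbert--Carleman determinant, kill both factors $\det_2(\I+T)$ and $\det_2(\I+T^*)$ via quasinilpotence (the key lemma $\det_2(\I+T)=1$ for Volterra $T$), and recover $\inte{0}{1}{T(t,t)}{t}$ from the kernel identity on the diagonal. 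What the paper's approach buys is self-containment (no $\det_2$ machinery, only the Fredholm series) and the stronger running statement $\log\det(\I+A_t)=\inte{0}{t}{T(s,s)}{s}$ for every $t$. What your approach buys is brevity and a transparent explanation of \emph{why} the answer is $\inte{}{}{T(t,t)}{t}$ rather than $2\inte{}{}{T(t,t)}{t}$: the factor of two one would na\"ively expect from $\det(\I+T)\det(\I+T^*)$ is absorbed into the discrepancy term $e^{-\tr(TT^*)}$, and your diagonal computation for $(T+T^*)(t,t)$ pins down exactly where the ``diagonal ambiguity'' flagged in the paper's introduction enters. One small point: your invocation of Mercer assumes $A$ itself is positive, which is part of the ambient Cholesky setup in the paper but is not stated in the theorem; you can avoid this by using the relation $Fd(A)=\det_2(\I+A)\exp\!\big(\inte{0}{1}{A(t,t)}{t}\big)$, valid for continuous Hilbert--Schmidt kernels, in place of your step (i), and then applying Mercer only to the genuinely positive trace-class operator $TT^*$.
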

The theorem can  be justified heuristically from the 'matrix-like' approximation of $\I + A$: $1 + T(t,t)$ corresponds to the diagonal elements, and $\log(1 + T(t,t)) \approx T(t,t)$ when $T(t,t)$ is small. 

\begin{proof}
In the following, we define  $\det (\I + M) =  Fd(M)  $ whenever $Fd(M)$ is well-defined. 
	
	It can easily be proved that the Fredholm determinant has the block-property of ordinary matrices; ie.:
	$$
	\det \vektor{\I + U & 0 \\ V & \I + X} = \det \vektor{\I + U & V \\ 0 & \I + X} = \det(\I + U) \det(\I + X)
	$$
	for continuous integral operators $U, V, X$ of appropriate sizes.
	
	Define $A_t$ and $T_t$ to be the restrictions of $A$ and $T$ to $[0,t]$ for $0 \leq t < 1$. By construction, $\I+A_t = (\I + T_t) (\I + T_t^*)$.
	
	Fix $t$, and let $ s > 0$ s.t. $t + s < 1$. %Define $X_s$ to be the restriction of $M$ to $[t, t+s]$. 
	Suppose $\I + T_{t+s}$ and $\I + A_{t+s}$ admit the following decompositions:
$$
	\I + T_{t+s} = \vektor{\I + T_t & 0 \\ K_s & \I + X_s}
$$
and
$$
	\I + A_{t+s} = \vektor{\I + A_t & B_s^* \\ B_s & \I + C_s}$$
	Using the Schur complement, we get
	$$
	\vektor{\I + A_t & B_s^* \\ B_s & \I + C_s} = \vektor{\I & 0 \\ B_s(1+A_t)^{-1} & \I} \vektor{\I + A_t &  B_s^* \\ 0  & \I + C_s - B_s(I +A_t)^{-1} B_s^*}
	$$
	which is valid since $\I + A_{t+s}$ is positive definite. By expanding the relation $(\I + T_{t+s})(\I + T_{t+s}) = \I + A_{t+s}$, one can show that 
	 that $(\I+ X_s)(\I+ X_s^*) = \I + C_s - B_s(\I + A_t)^{-1} B_s^*$, and therefore
	\begin{equation}\label{a-dekomp-x}
	\det \vektor{\I + A_t & B_s^* \\ B_s & \I + C_s} = \det \vektor{\I + A_t}  \det \vektor{\I + X_s + X_s^* + X_sX_s^*}.
	\end{equation}
	
	\begin{lemma} \label{x-graense}
		Assume $T$ is continuous in a neighbourhood around $(t,t)$ as a function on the closed lower triangle. 
		Then
		\begin{equation}
		\lim_{s \pil 0}  \frac{\det (I + X_s + X_s^* + X_sX_s^*) - 1}{s} = M(t,t). \label{lemma-a1-eq}
		\end{equation}
		ie. 
		$$
		\frac{d}{ds} \det (I + X_s + X_s^* + X_sX_s^*)\Big|_{s = 0} = M(t,t)
		$$

	\end{lemma}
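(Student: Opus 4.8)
The plan is to read the limit off the Fredholm-determinant series of $G_s := X_s + X_s^* + X_s X_s^*$. The first and most delicate step is to check that, even though $X_s$ and $X_s^*$ are triangular operators whose kernels jump across the diagonal — exactly the situation in which $Fd$ is ambiguous — the operator $G_s$ itself has a continuous kernel. Off the diagonal one computes $G_s(x,y) = T(x,y) + \int_t^{y} T(x,u)T(y,u)\de u$ for $x>y$ and $G_s(x,y) = T(y,x) + \int_t^{x} T(x,u)T(y,u)\de u$ for $x<y$, and — here the hypothesis that $T$ is continuous on the closed lower triangle around $(t,t)$ is used — both one-sided limits as $x\to y$ equal $T(y,y) + \int_t^{y} T(y,u)^2\de u$; hence $G_s$ agrees almost everywhere with a continuous kernel $\tilde G$ whose diagonal is $\tilde G(x,x) = T(x,x) + \int_t^{x} T(x,u)^2\de u$. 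Thus $\det(\I+G_s) = Fd(\tilde G) = \sum_{n\ge 0}\frac{1}{n!}\int_{[t,t+s]^n}\det[\tilde G(x_p,x_q)]_{p,q=1}^n\de x_1 \cdots \de x_n$ is unambiguous, and the lemma amounts to $\frac{1}{s}\big(Fd(\tilde G)-1\big)\to T(t,t)$.

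Next I isolate the terms of order $0$ and $1$. Fix $C$ with $|T|\le C$ on a neighbourhood of $(t,t)$ inside the closed lower triangle, so that for small $s$ the square $[t,t+s]^2$ meets the triangle only there. The $n=0$ term is $1$ and cancels the $-1$ in the numerator. The $n=1$ term is $\int_t^{t+s}\tilde G(x,x)\de x = \int_t^{t+s} T(x,x)\de x + \int_t^{t+s}\!\int_t^{x} T(x,u)^2\de u\de x$; divided by $s$, the first integral tends to $T(t,t)$ because $x\mapsto T(x,x)$ is continuous at $t$, while the second is at most $C^2 s^2$ and contributes nothing. So the part of $\frac{1}{s}\big(\det(\I+G_s)-1\big)$ coming from $n\le 1$ converges to $T(t,t)$, the value on the right of \eqref{lemma-a1-eq} (the diagonal value of the Cholesky factor at $(t,t)$, i.e.\ the integrand of Theorem~\ref{fdet-dekomp-thm}).

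It remains to show that the tail $R_s := \sum_{n\ge 2}\frac{1}{n!}\int_{[t,t+s]^n}\det[\tilde G(x_p,x_q)]_{p,q=1}^n\de x_1\cdots\de x_n$ is $O(s^2)$, so that it disappears after dividing by $s$. Since $\sup_{[t,t+s]^2}|\tilde G|\le C+C^2 s$ for small $s$, Hadamard's inequality bounds the $n$-th integrand by $n^{n/2}(C+C^2 s)^n$, whence $|R_s|\le \sum_{n\ge 2} n^{n/2}(C+C^2 s)^n s^n/n!$; the right-hand side is $s^2$ times a power series in $s$ of infinite radius of convergence, so $R_s=O(s^2)$ near $s=0$. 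Combining the three estimates gives $\lim_{s\to 0}\frac{\det(\I+X_s+X_s^*+X_s X_s^*)-1}{s} = T(t,t)$.

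The main obstacle is the opening step: showing that $\det(\I+G_s)$ is well-posed at all and pinning down $\tilde G$. The factors $\I+X_s$ and $\I+X_s^*$ are individually of the triangular kind for which $Fd$ is ambiguous, and it is only because their diagonal jumps are reflections of one another across the diagonal that the jump cancels in $X_s+X_s^*$ — the term $X_sX_s^*$ being continuous already — so one needs enough uniformity in this continuity to make the tail estimate go through. As a cross-check, valid whenever $T$ is regular enough that $X_s+X_s^*$ is trace class, one can instead use multiplicativity of the regularised $2$-determinant together with $\det_2(\I+X_s)=1$ (which holds since $X_s$ is quasinilpotent) to get $\det(\I+G_s)=\exp\big(\tr(X_s+X_s^*)\big)=\exp\big(\int_t^{t+s}T(x,x)\de x\big)$, which differentiates to $T(t,t)$ at once; but that trades the elementary series manipulation for Carleman-determinant bookkeeping, so I would prefer the direct computation above.
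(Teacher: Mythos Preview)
Your argument is correct and follows essentially the same route as the paper's own proof: expand the Fredholm series of $Y_s = X_s + X_s^* + X_sX_s^*$, identify the $n=1$ term as giving $T(t,t)$ in the limit, and bound the tail $n\ge 2$ by $O(s^2)$. You are in fact more careful than the paper in checking that $G_s$ has a continuous representative (so that $Fd$ is unambiguous) and in invoking Hadamard's inequality for the determinant bound, whereas the paper uses the cruder $n!$ bound and glosses over the diagonal issue; the alternative $\det_2$ argument you sketch at the end is also a nice independent check.
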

	
	\begin{proof}
		Define $Y_s := X_s + X_s^* + X_sX_s^*$.
		
		Since $T$ is continuous, $X_s$ is bounded and hence the kernel for $X_sX_s^*$ decreases rapidly towards zero for $ s \pil 0$. Using continuity of $T$ in $(t,t)$, we conclude that the kernel of $Y_s$ deviate from $T(t,t)$ by less than $\epsilon_s$ where $\epsilon_s \pil 0$ for $s \pil 0$. 
		
		Using dominated convergence on the terms of the Fredholm determinant (indexed by $n$ starting from zero), we will show that all higher-order terms of eq. \eqref{lemma-a1-eq} vanish in the limit, leaving an expression resembling a first-order Taylor expansion. 
		
		Let $n \geq 2$. The corresponding entry in the Fredholm determinant of $Y_s$ is: 	
		\begin{equation*}
		\frac{1}{n!} \int_{[0,s]^n} \det[(Y_s)(x_p,x_q)]_{p,q = 1}^n \de x_1 \dots \de x_n 
		< \\
		s^{n} (T(t,t) + \epsilon_s)^n 
		\end{equation*}
		which tend to zero for $s \pil 0$ when multiplied by $s^{-1}$.
		The entry for $n = 1$ is
		$$
		 \inte{0}{s} {Y_s(x,x)} {x}
		$$
		which multiplied by $s^{-1}$ converges to $T(t,t)$ for $s \pil 0$. The entry for $n = 0$ is trivially 1. 
		
Now select $\tilde{s} > 0 $ such that $\epsilon_{\tilde{s}} < 1$ and $\tilde{s} (T(t,t) + 1) < \tfrac{1}{2}$.
Then 
\begin{multline*}
s^{-1} (d(Y_s) - 1) < s^{-1}\inte{0}{s} {Y_s(x,x)} {x} + \sum_{n = 2}^\infty s^{n-1} (T(t,t) + \epsilon_s)^n < \\
 s^{-1}\inte{0}{s} {(T(t,t) + 1)} {x}  + \sum_{n = 2}^\infty \tilde{s}^{n-1} (T(t,t) + 1)^n < \\
 {(T(t,t) + 1)}  +  (T(t,t) + 1)  \sum_{n = 2}^\infty (1/2)^{n-1}
\end{multline*}	
for $0 < s < \tilde{s}$. 
Hence we may apply the dominated convergence theorem and conclude
\begin{multline*}
\lim_{s \pil 0} s^{-1} (Fd(Y_s) - 1)  = \lim_{s \pil 0} \left( -s^{-1} +  \sum_{n = 0}^\infty \frac{1}{s(n!)} \int_{[0,s]^n} \det[(Y_s)(x_p,x_q)]_{p,q = 1}^n \de x_1 \dots \de x_n \right) = \\
 \sum_{n = 1}^\infty \lim_{s \pil 0}  \frac{1}{s(n!)} \int_{[0,s]^n} \det[(Y_s)(x_p,x_q)]_{p,q = 1}^n \de x_1 \dots \de x_n = T(t,t)
\end{multline*}	
		This proves the result. 
	\end{proof}
	
	Now, returning to $A_{t+s}$,
	\begin{multline*}
	\frac{d}{dt} \det (\I + A_t) = \lim_{s \pil 0}  \frac{\det (\I + A_{t+s}) -  \det (\I + A_t) }{s} = \\
	\det (\I + A_t)  \lim_{s \pil 0} \frac{\det (\I + X_s + X_s^* + X_sX_s^*)}{s} = \det (\I + A_t) M(t,t)
	\end{multline*}
	using Eq \eqref{a-dekomp-x} and Lemma \ref{x-graense}. Therefore,
	\begin{equation}
	\frac{d}{dt} \log \det (\I + A_t) = T(t,t) \label{ddtlogmt}
	\end{equation}
	Using that $\det (\I + A_0) = 1$ and integrating \eqref{ddtlogmt}, we get 
	$$
	\log \det (\I + A_t) = \inte{0}{t}{T(s,s)}{s}, \quad 0 \leq t \leq 1
	$$
\end{proof}

\subsection*{Discussion}
The Fredholm determinant has a definition (Eq. 1) which makes it very hard to evaluate by direct calculation. This work presents a  simple and elegant solution to this problem using the Cholesky decomposition.
Naturally, applications of Theorem \ref{fdet-dekomp-thm} rely on knowing the Cholesky decomposition of the operator $\I + A$. 
One approach is to do applications with  triangular matrices as the building block: we design or propose a triangular operator $T$ (or a class of triangular operators), and then construct the positive definite matrix $\I + A = (\I + T)(\I + T^*)$. 

\subsection*{Acknowledgements}
I am grateful to Associate Professor Bo Markussen (Univeristy of Copenhagen) and Professor Folkmar  Bornemann (TU München) for comments to the theorem and proof. In particular I would like to thank Professor Bornemann for pointing out an alternative proof using the theory of resolvent kernels.

\bibliographystyle{agsm}

\end{document}